\newtheorem{thm}{Theorem}
\newtheorem{cor}{Corollary}
\newtheorem{lem}{Lemma}
\newcommand{\B}{{\mathcal B}}
\newcommand{\T}{{\mathcal T}}
\newcommand{\F}{{\mathbb F}}
\newcommand{\da}{{\text{-}}}
\newcommand{\qbinom}[3]{\genfrac{[}{]}{0pt}{}{#1}{#2}_{#3}}
\newcommand{\subspaces}[2]{\genfrac{[}{]}{0pt}{}{#1}{#2}}
\DeclareMathOperator\GL{GL}
\DeclareMathOperator\borel{B}
\begin{document}

\title{$q$-Analogs of $t$-Wise Balanced Designs from Borel Subgroups}
\author{Michael Braun}
\address{University of Applied Sciences Darmstadt, Germany, Faculty of Computer Science, michael.braun@h-da.de}

\maketitle

\begin{abstract}
A $t\da(n,K,\lambda;q)$ design, also called the $q$-analog of a $t$-wise balanced design, is a set $\B$ of subspaces with dimensions contained in $K$ of the $n$-dimensional vector space $\F_q^n$ over the finite field with $q$ elements such that each $t$-subspace of $\F_q^n$ is contained in exactly $\lambda$ elements of $\B$. In this paper we give a construction of an infinite series of nontrivial $t\da(n,K,\lambda;q)$ designs with $|K|=2$ for all dimensions $t\ge 1$ and all prime powers $q$ admitting the standard Borel subgroup as group of automorphisms. Furthermore, replacing $q=1$ gives an ordinary $t$-wise balanced design defined on sets.
\end{abstract}

\section{Introduction}

In the following let $\subspaces{\F_q^n}{k}$ denote the set of $k$-subspaces of the $n$-dimensional vector space $\F_q^n$ over the finite field $\F_q$ with $q$ elements. The expression $\qbinom{n}{k}{q}=\prod_{i=0}^{k-1}\frac{q^n-q^i}{q^k-q^i}$ counts the number of $k$-subspaces of $\F_q^n$ and it is called the $q$-\emph{binomial coefficient}. 

A \emph{$t\da(n,K,\lambda;q)$ design}, also called the \emph{$q$-analog of a $t$-wise balanced design} or $t$-wise balanced design over $\F_q$, is a set $\B$ of subspaces of $\F_q^n$ with dimensions contained in $K$ such that each $t$-subspace of $\F_q^n$ is contained in exactly $\lambda$ members of the set $\B$.

The set $\B=\cup_{k\in K}\subspaces{\F_q^n}{k}$ is already a $t\da(n,K,\lambda_{\max};q)$ design, the so-called \emph{trivial} $t$-wise balanced design, where $\lambda_{\max}=\sum_{k\in K}\qbinom{n-t}{k-t}{q}$.

If $K=\{k\}$ is a one element set $\B$ is simply called the $q$-analog of a $t$-design and write $t\da(n,k,\lambda;q)$ instead of $t\da(n,\{k\},\lambda;q)$ to indicate the parameters of the design. So far only a few results have been published on $t\da(n,k,\lambda;q)$ designs. Explicit constructions \cite{BEO+13,BKL05,Ito98,MMY95,Suz90,Suz92,Tho87,Tho96} are known for $t=1,2,3$ whereas the existence of these objects for all $t\ge 1$ was recently shown \cite{FLV13:arxiv}.

In this paper we concentrate on the case $|K|=2$ and present an explicit construction of a series of nontrivial $t\da(n,K,\lambda;q)$ designs for all $t\ge 1$ and all prime powers $q$.

\section{Groups of Automorphisms}

The general linear group $\GL(\F_q^n)$ of the vector space $\F_q^n$, whose elements are represented by $n\times n$-matrices, acts on $\subspaces{\F_q^n}{k}$ by left multiplication $\alpha S := \{\alpha x\mid x\in S\}$. 

An element $\alpha\in \GL(\F_q^n)$ is called an \emph{automorphism} of a $t\da(n,K,\lambda;q)$ design $\B$ if and only if $\B=\alpha\B := \{\alpha S\mid S\in \B\}$. The set of all automorphisms of a design forms a group, called \emph{the automorphism group} of the $t$-wise balanced design. Every subgroup of the automorphism group of a $t$-wise balanced design is denoted as \emph{a group of automorphisms} of the $t$-wise balanced design.

A construction approach derives from the Kramer and Mesner construction of ordinary $t$-designs \cite{KM76}: 

A subgroup $G$ of $\GL(\F_q^n)$ induces an equivalence relation on the set of $k$-subspaces of $\F_q^n$ by defining $S\simeq_G S':\Longleftrightarrow \exists\alpha\in G: \alpha S=S'$. The corresponding equivalence class of $S$ is called the $G$-orbit on $S$  and it is denoted by $G(S):=\{\alpha S\mid \alpha \in G\}$. The stabilizer of $S$ is abbreviated by $G_S:=\{\alpha\in G\mid \alpha S = S\}$. 

Now, a $t\text{-}(n,K,\lambda;q)$ design $\B$ admits a subgroup $G$ of the general linear $\GL(\F_q^n)$ as a group of automorphisms if and only if $\B$ consists of $G$-orbits on $\cup_{k\in K}\subspaces{\F_q^n}{k}$. 

The $G$-incidence matrix $A_{t,k}^G$ is defined to be the matrix whose rows and columns are indexed by the $G$-orbits on the set of $t$- and $k$-subspaces of $\F_q^n$, respectively. The entry indexed by the orbit $G(T)$ on $\subspaces{\F_q^n}{t}$ and by the orbit $G(S)$ on $\subspaces{\F_q^n}{k}$ is defined to be the number $|\{S'\in G(S)\mid T\subseteq S'\}|$. Note that each row of $A_{t,k}^G$ adds up to the constant value $\qbinom{n-t}{k-t}{q}$. 

If $A_{t,K}^G:=|_{k\in K}A_{t,k}^G$ denotes the concatenation of all $G$-incidence matrices $A_{t,k}^G$ for all $k\in K$ we obtain the following result: 

\begin{thm}
A $t\text{-}(n,K,\lambda;q)$ design admitting $G\le \GL(\F_q^n)$ as a group of automorphisms exists if and only if there is a $0/1$-column vector $\mathbf{x}$ satisfying $A_{t,K}^G\mathbf{x}=\lambda\mathbf{1}$, where $\mathbf{1}$ denotes the all-one column vector. The vector $\mathbf{x}$ represents the corresponding selection of $G$-orbits on the set of subspaces $\cup_{k\in K}\subspaces{\F_q^n}{k}$.
\end{thm}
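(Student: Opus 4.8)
The plan is to prove the theorem by unpacking the Kramer–Mesner correspondence established in the preceding paragraphs: a $t$-$(n,K,\lambda;q)$ design admitting $G$ as a group of automorphisms is precisely a union of $G$-orbits on $\cup_{k\in K}\subspaces{\F_q^n}{k}$ satisfying the covering condition. So I would first fix notation: let the columns of $A_{t,K}^G$ be indexed by the $G$-orbits $G(S_1),\dots,G(S_m)$ on $\cup_{k\in K}\subspaces{\F_q^n}{k}$ and its rows by the $G$-orbits $G(T_1),\dots,G(T_r)$ on $\subspaces{\F_q^n}{t}$. A $0/1$-vector $\mathbf{x}=(x_1,\dots,x_m)^{\mathsf T}$ then corresponds bijectively to the subset $\B=\bigcup_{j:\,x_j=1}G(S_j)$ of selected orbits, and I would verify this is a well-defined bijection between $0/1$-vectors and unions of $G$-orbits.

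\smallskip

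The core of the argument is to show that the matrix equation $A_{t,K}^G\mathbf{x}=\lambda\mathbf{1}$ holds exactly when every $t$-subspace is covered by precisely $\lambda$ members of $\B$. The key step is to interpret the $i$-th entry of the product $A_{t,K}^G\mathbf{x}$. By definition, the entry of $A_{t,K}^G$ in row $G(T_i)$ and column $G(S_j)$ equals $|\{S'\in G(S_j)\mid T_i\subseteq S'\}|$, so
\begin{equation*}
(A_{t,K}^G\mathbf{x})_i=\sum_{j:\,x_j=1}|\{S'\in G(S_j)\mid T_i\subseteq S'\}|=|\{S'\in\B\mid T_i\subseteq S'\}|,
\end{equation*}
where the last equality uses that $\B$ is the disjoint union of the selected orbits. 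Thus the $i$-th component of $A_{t,K}^G\mathbf{x}$ counts exactly the number of blocks of $\B$ containing the representative $T_i$.

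\smallskip

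The step requiring genuine care is showing that this count depends only on the $G$-orbit $G(T_i)$ and not on the chosen representative $T_i$, so that imposing the condition on one representative per orbit suffices. This follows because $\B$ is $G$-invariant: for $\alpha\in G$, the map $S'\mapsto\alpha S'$ is a bijection from $\{S'\in\B\mid T\subseteq S'\}$ to $\{S'\in\B\mid \alpha T\subseteq S'\}$, so any two $t$-subspaces in the same $G$-orbit are contained in the same number of blocks. Consequently the well-definedness of the entries of $A_{t,K}^G$ is guaranteed, and the equation $A_{t,K}^G\mathbf{x}=\lambda\mathbf{1}$ is equivalent to requiring that each of the representatives $T_1,\dots,T_r$, hence every $t$-subspace of $\F_q^n$, lies in exactly $\lambda$ members of $\B$. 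Combining both directions of this equivalence with the $G$-orbit characterization of $G$-invariant designs yields the claim. I expect the main obstacle to be formulating the orbit-representative independence cleanly, since it is the point where $G$-invariance is genuinely used and where a careless argument could conflate the role of rows (arbitrary representatives) with that of columns (selected orbits).
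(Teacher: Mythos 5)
Your proof is correct and takes essentially the same route the paper relies on: the paper states this theorem without a written proof, treating it as immediate from the Kramer--Mesner construction \cite{KM76} together with the preceding observation that a design admits $G$ as a group of automorphisms if and only if it is a union of $G$-orbits, and your argument is exactly the standard one filling in those details. In particular, your bijection $S'\mapsto\alpha S'$ establishing that the block count of a $t$-subspace depends only on its $G$-orbit is the right (and necessary) step to justify checking the condition $A_{t,K}^G\mathbf{x}=\lambda\mathbf{1}$ on one representative per row orbit.
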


For instance, Tables \ref{tab:kmr} and \ref{tab:kmr2} show a list of $q$-analogs of $t$-wise balanced designs over finite fields we constructed with the Kramer-Mesner approach using a computer search. All prescribed groups of automorphisms we used are subgroups of the normalizer of a Singer cycle, which is generated by a \emph{Singer cycle} $\sigma$ of order $q^n-1$ and the \emph{Frobenius automorphism} $\phi$ of order $n$. This group and its subgroups have already been used for the successful construction of designs over finite fields (see \cite{BEO+13,BKL05}).

\begin{table}[!htbp]
\centering
\caption{$t$-$(n,K,\lambda;q)$ designs with $|K|=2$}\label{tab:kmr}
\begin{tabular}{lp{9.1cm}}
\toprule
$t\text{-}(n,K,\lambda;q)$ & $\lambda_{\max}$; group; values for $\lambda$\\
\midrule
$2\text{-}(6,\{3,4\},\lambda;2)$ & 50;  $\langle\sigma\rangle$; 5, 8, 9, 11, 12, 14, 15, 17, 18, 20, 21, 23, 24\\
\midrule
$2\text{-}(7,\{3,4\},\lambda;2)$ & 186;  $\langle\sigma\rangle$; 5, 6, 7, 8, 9, 10, 11, 12, 13, 14, 15, 16, 17, 18, 19, 20, 21, 22, 23, 24, 25, 26, 27, 28, 29, 30, 31, 32, 33, 34, 35, 36, 37, 38, 39, 40, 41, 42, 43, 44, 45, 45, 46, 47, 48, 49, 50, 51, 52, 53, 54, 55, 56, 57, 58, 59, 60, 61, 62, 63, 64, 65, 66, 67, 68, 69, 70, 71, 72, 73, 74, 75, 76, 77, 78, 79, 80, 81, 82, 83, 84, 85, 86, 87, 88, 89, 90, 91, 92, 93, 94 \\
\midrule
$2\text{-}(8,\{3,4\},\lambda;2)$ & 714;  $\langle\sigma,\phi\rangle$; 7, 21, 28, 35, 42, 47, 56, 63, 70, 77, 84, 91, 98, 105, 112, 119, 126, 133, 140, 147, 154, 161, 168, 175, 182, 189, 196, 203, 210, 217, 224, 231, 238, 245, 252, 259, 266, 273, 280, 287, 294, 301, 308, 315, 322, 329, 336, 343, 350, 357 \\
\midrule
$2\text{-}(9,\{3,4\},\lambda;2)$ & 2794; $\langle\sigma,\phi\rangle$; 21, 22, 42, 43, 63, 64, 84, 85, 105, 106, 126, 127, 147, 148, 168, 169, 189, 190, 210, 211, 231, 232, 252, 253, 273, 274, 294, 295, 315, 316, 336, 337, 357, 358, 378, 379, 399, 400, 420, 421, 441, 442, 462, 463, 483, 484, 504, 505, 525, 526, 546, 547, 567, 568, 588, 589, 609, 610, 630, 631, 651, 652, 672, 673, 693, 694, 714, 715, 735, 736, 756, 757, 777, 778, 798, 799, 819, 820, 840, 841, 861, 862, 882, 883, 903, 904, 924, 925, 945, 946, 966, 967, 987, 988, 1008, 1009, 1029, 1030, 1050, 1051, 1071, 1072, 1092, 1093, 1113, 1114, 1134, 1135, 1155, 1156, 1176, 1177, 1197, 1198, 1218, 1219, 1239, 1240, 1260, 1261, 1281, 1282, 1302, 1303, 1323, 1324, 1344, 1345, 1365, 1366, 1387\\
\bottomrule
\end{tabular}
\end{table}

\begin{table}[!htbp]
\centering
\caption{$t$-$(n,K,\lambda;q)$ designs with $|K|=3$}\label{tab:kmr2}
\begin{tabular}{lp{8.7cm}}
\toprule
$t\text{-}(n,K,\lambda;q)$ & $\lambda_{\max}$; group; values for $\lambda$\\
\midrule
$2\text{-}(6,\{3,4,5\},\lambda;2)$ & 65; $\langle\sigma\rangle$; 23, 30\\
\midrule
$2\text{-}(7,\{3,4,5\},\lambda;2)$ & 341; $\langle\sigma,\phi\rangle$; 71, 78, 82, 85, 86, 89, 92, 93, 96, 99, 103, 106, 107, 113, 115, 119, 120, 122, 124, 126, 127, 128, 129, 130, 131, 133, 134, 135, 136, 137, 138, 139, 140, 141, 142, 143, 144, 145, 146, 147, 148, 149, 150, 151, 152, 153, 154, 155, 156, 157, 158, 159, 160, 161, 162, 163, 164, 165, 166, 167, 168, 169, 170 \\
\midrule
$3\text{-}(8,\{4,5,6\},\lambda;2)$ & 341; $\langle\sigma,\phi\rangle$; 156, 166\\
\bottomrule
\end{tabular}
\end{table}

Note, that we were also able to find a \emph{large set} of $t\da(n,K,\lambda;q)$ designs which is a set of disjoint $t\da(n,K,\lambda;q)$ designs such that their union cover the whole set $\cup_{k\in K}\subspaces{\F_q^n}{k}$ for some set of parameters:
\begin{itemize}
\item three disjoint $2\da(7,\{3,4\},62;2)$ designs
\item two disjoint $2\da(7,\{3,4\},93;2)$ designs
\item two disjoint $2\da(8,\{3,4\},357;2)$ designs
\end{itemize}

\section{Echelon Equivalence}

In the following we consider subspaces as column spaces. Let $S$ be a $k$-subspace of $\F_q^n$. A matrix $\Gamma$ having $n$ rows, $k$ columns, and entries in $\F_q$ is called a \emph{generator matrix} of $S$ if and only if the columns of $\Gamma$ yield a base of $S$. There are several generator matrices for the same $k$-subspace $S$ but using elementary Gaussian transformations of the columns yields a uniquely determined generator matrix, the \emph{canonical} generator matrix, $\Gamma_C(S)$ of the subspace $S$, having the structure shown in Table \ref{tab:canonic}.

\begin{table}[!htbp]
\centering
\caption{The structure of an Echelon matrix}\label{tab:canonic}
\[
\begin{array}{|cccccc|l}
\cline{1-1}\cline{6-6}
* & * & * & \cdots & * & * \\
\vdots & \vdots & \vdots & & \vdots & \vdots \\
* & * & * & \cdots & * & * \\
\cline{1-6}
1 & 0 & 0 & \cdots & 0 & 0 & \mbox{base row} \\
\cline{1-6}
& \multicolumn{1}{|c}{*} & * & \cdots & * & * \\
& \multicolumn{1}{|c}{\vdots} & \vdots & & \vdots & \vdots \\
& \multicolumn{1}{|c}{*} & * & \cdots & * & * \\
\cline{1-6}
& \multicolumn{1}{|c}{1} & 0 & \cdots & 0 & 0& \mbox{base row} \\
\cline{1-6}
& & \multicolumn{1}{|c}{*} & \cdots & * & * \\
& & & \ddots & & \\
& & & & \multicolumn{1}{|c}{*} & * \\
& & & & \multicolumn{1}{|c}{\vdots} & \vdots \\
& & & & \multicolumn{1}{|c}{*} & * \\
\cline{1-6}
& & & & \multicolumn{1}{|c}{1} & 0 & \mbox{base row}\\
\cline{1-6}
& & & & & \multicolumn{1}{|c|}{*} \\
& & & & & \multicolumn{1}{|c|}{\vdots} \\
& & & & & \multicolumn{1}{|c|}{*} \\
\cline{1-6}
& & & & & \multicolumn{1}{|c|}{1} & \mbox{base row}\\
\cline{1-6}
& & & & & 0 \\
& & & & & \vdots \\
& & & & & 0 \\
\cline{1-1}\cline{6-6}
\end{array}
\]
\end{table}

Hereby the stars in this matrix represent elements in $\F_q$ and the row numbers where new steps commence are called \emph{base rows indices}.

Now, we introduce an equivalence relation on the set $\subspaces{\F_q^n}{k}$ which we call \emph{Echelon equivalence}: 

Two $k$-subspaces $S$ and $S^\prime$ are defined to be \emph{Echelon equivalent}, abbreviated by $S\simeq_E S^\prime$, if and only if the base row indices of the canonical generator matrices $\Gamma_C(S)$ and $\Gamma_C(S^\prime)$ are the same.

For instance, the two $3$-subspaces of $\F_5^6$ generated by the following generator matrices are Echelon equivalent:
\[
\begin{bmatrix}
2&1 &1 \\
\bf 1&\bf0 &\bf0 \\
0&1 &4 \\
\bf0&\bf1 &\bf0\\
0&0 &2 \\
\bf0&\bf0 &\bf1 \\
\end{bmatrix}
\quad\mbox{ and }\quad
\begin{bmatrix}
4&0 &3 \\
\bf 1&\bf0 &\bf0 \\
0&0 &2 \\
\bf0&\bf1 &\bf0\\
0&0 &1 \\
\bf0&\bf0 &\bf1 \\
\end{bmatrix}
\]
Table \ref{tab:canonic2} shows all Echelon equivalence classes of $\F_5^6$.
\begin{table}[!htbp]
\centering
\caption{The Echelon forms of $\F_5^6$}\label{tab:canonic2}
$\begin{bmatrix}
1&0&0\\
0&1&0\\
0&0&1\\
0&0&0\\
0&0&0\\
0&0&0\\
\end{bmatrix}$
$\begin{bmatrix}
1&0&0\\
0&1&0\\
0&0&*\\
0&0&1\\
0&0&0\\
0&0&0\\
\end{bmatrix}$
$\begin{bmatrix}
1&0&0\\
0&1&0\\
0&0&*\\
0&0&*\\
0&0&1\\
0&0&0\\
\end{bmatrix}$
$\begin{bmatrix}
1&0&0\\
0&1&0\\
0&0&*\\
0&0&*\\
0&0&*\\
0&0&1\\
\end{bmatrix}$
$\begin{bmatrix}
1&0&0\\
0&*&*\\
0&1&0\\
0&0&1\\
0&0&0\\
0&0&0\\
\end{bmatrix}$\\
$\begin{bmatrix}
1&0&0\\
0&*&*\\
0&1&0\\
0&0&*\\
0&0&1\\
0&0&0\\
\end{bmatrix}$
$\begin{bmatrix}
1&0&0\\
0&*&*\\
0&1&0\\
0&0&*\\
0&0&*\\
0&0&1\\
\end{bmatrix}$
$\begin{bmatrix}
1&0&0\\
0&*&*\\
0&*&*\\
0&1&0\\
0&0&1\\
0&0&0\\
\end{bmatrix}$
$\begin{bmatrix}
1&0&0\\
0&*&*\\
0&*&*\\
0&1&0\\
0&0&*\\
0&0&1\\
\end{bmatrix}$
$\begin{bmatrix}
1&0&0\\
0&*&*\\
0&*&*\\
0&*&*\\
0&1&0\\
0&0&1\\
\end{bmatrix}$\\
$\begin{bmatrix}
*&*&*\\
1&0&0\\
0&1&0\\
0&0&1\\
0&0&0\\
0&0&0\\
\end{bmatrix}$
$\begin{bmatrix}
*&*&*\\
1&0&0\\
0&1&0\\
0&0&*\\
0&0&1\\
0&0&0\\
\end{bmatrix}$
$\begin{bmatrix}
*&*&*\\
1&0&0\\
0&1&0\\
0&0&*\\
0&0&*\\
0&0&1\\
\end{bmatrix}$
$\begin{bmatrix}
*&*&*\\
1&0&0\\
0&*&*\\
0&1&0\\
0&0&1\\
0&0&0\\
\end{bmatrix}$
$\begin{bmatrix}
*&*&*\\
1&0&0\\
0&*&*\\
0&1&0\\
0&0&*\\
0&0&1\\
\end{bmatrix}$\\
$\begin{bmatrix}
*&*&*\\
1&0&0\\
0&*&*\\
0&*&*\\
0&1&0\\
0&0&1\\
\end{bmatrix}$
$\begin{bmatrix}
*&*&*\\
*&*&*\\
1&0&0\\
0&1&0\\
0&0&1\\
0&0&0\\
\end{bmatrix}$
$\begin{bmatrix}
*&*&*\\
*&*&*\\
1&0&0\\
0&1&0\\
0&0&*\\
0&0&1\\
\end{bmatrix}$
$\begin{bmatrix}
*&*&*\\
*&*&*\\
1&0&0\\
0&*&*\\
0&1&0\\
0&0&1\\
\end{bmatrix}$
$\begin{bmatrix}
*&*&*\\
*&*&*\\
*&*&*\\
1&0&0\\
0&1&0\\
0&0&1\\
\end{bmatrix}$
\end{table}

If $i$ is the number of stars in the matrix then $q^i$ is the number of $k$-subspaces in the corresponding equivalence class. The maximum number of stars is $(n-k)k$. 

Replacing the stars by arbitrary elements of $\F_q$ we get a canonical transversal of the Echelon equivalence classes. In order to determine the number of different Echelon equivalence classes we have to calculate the number of possibilities to choose $k$ base row indices in a matrix with $n$ rows, which is $\binom{n}{k}$. Hence, each Echelon equivalence class corresponds to a unique $k$-subset of the set of possible row indices $\{1,\ldots, n\}$. 

More formally, let $e_j$ denote the unit vector having exactly one entry $1$ in the $j$th position and $0$ in the remaining positions. Representatives of the Echelon equivalence classes are given by generator matrices consisting of unit vectors $\Gamma(\pi):=[e_{\pi_1}|\ldots|e_{\pi_k}]$ where $\pi=\{\pi_1,\ldots,\pi_k\}$ is a $k$-subset of $\{1,\ldots,n\}$. The corresponding subspace is denoted by $E(\pi):=\langle e_{\pi_1},\ldots,e_{\pi_k}\rangle$.

For example, the Echelon equivalence class containing
\[
\Gamma(\pi)=
[e_2|e_3|e_6]=
\begin{bmatrix}
0&0&0 \\
1&0&0 \\
0&0&0 \\
0&1&0\\
0&0&0 \\
0&0&1 \\
\end{bmatrix}
\]
corresponds to the subset $\pi=\{2,3,6\}$. 

\section{Parabolic and Borel Subgroups}

A \emph{flag} $[S_1,\ldots,S_r]$ of $\F_q^n$ of length $r$ is a sequence of subspaces of $\F_q^n$ satisfying $\{0\}\subsetneq S_1\subsetneq\ldots \subsetneq S_r\subseteq \F_q^n$. The maximum length $r$ of a flag is $n$. In this case the sequence of subspaces satisfies $\dim(S_i)=i$ and $S_n=\F_q^n$. Flags of length $n$ are called \emph{complete}. The complete flag $[E_1,\ldots,E_n]$ consisting of all standard subspaces $E_i:=\langle e_1,\ldots,e_i\rangle$ is called the \emph{standard} flag. 

We can also define a group action of $\GL(\F_q^n)$ on the set of flags of $\F_q^n$ by setting $\alpha[S_1,\ldots,S_r]:=[\alpha S_1,\ldots,\alpha S_r]$ for all $\alpha\in\GL(\F_q^n)$ and all flags $[S_1,\ldots,S_r]$.

The stabilizer of a flag $[S_1,\ldots,S_r]$ is also called \emph{parabolic} subgroup \cite{AB95} and it is the intersection of all single stabilizers of $S_i$:
\[
\GL(\F_q^n)_{[S_1,\ldots,S_r]}=\bigcap_{i=1}^r\GL(\F_q^n)_{S_i}
\]

In order to determine the stabilizer of the standard flag we first have to consider the stabilizer of the standard subspace $E_i$ which consists of all matrices of the form
\[
\left[\begin{array}{c|c}
A & B\\
\hline
\mathbf 0 & C
\end{array}
\right]
\]
where $A\in\GL(\F_q^i)$, $B\in\F_q^{i\times n-i}$ and $C\in\GL(\F_q^{n-i})$. The intersection of the stabilizers of all $E_i$ yields the stabilizer of the standard flag which is the set of all nonsingular upper triangular matrices---the so called \emph{standard Borel subgroup} \cite{Bor56}:
\[
\borel(\F_q^n):=\GL(\F_q^n)_{[E_1,\ldots,E_n]}=\bigcap_{i=1}^n\GL(\F_q^n)_{E_i}
\]

Now we establish the following connection between Echelon equivalence and the standard Borel subgroup \cite{Bra09}: 

The multiplication of  $\alpha\in \borel(\F_q^n)$ to a $k\times n$ matrix $\Gamma$ from the left, is equivalent to a series of elementary Gaussian transformations of the rows of $\Gamma$ such that rows will be multiplied by a nonzero finite field element or such that a multiple of a first row will be added to a second row above to the first row. Applying an arbitrary element $\alpha\in \borel(\F_q^n)$ to a canonic generator matrix $\Gamma_C(S)$ from the left, $\alpha\Gamma_C(S)  = \Gamma,$ yields a generator matrix $\Gamma$ of a subspace $S'$ whose base row indices are the same as in $\Gamma_C(S)$. Moreover, the Echelon equivalence and the equivalence induced by the group action of the Borel subgroup are the same:
\[
S\simeq_E S'\iff S\simeq_{\borel(\F_q^n)} S'
\]

This also yields that representatives of the $\borel(\F_q^n)$-orbits on the set of $k$-subspaces of $\F_q^n$ can be obtained from $k$-subsets of $\{1,\ldots,n\}$:

\begin{cor}
Representatives of the $\borel(\F_q^n)$-orbits on the set $\subspaces{\F_q^n}{k}$ are given by the subspaces $E(\pi)$ and their generator matrices $\Gamma(\pi)$ where $\pi$ is a $k$-subset of $\{1,\ldots,n\}$.
\end{cor}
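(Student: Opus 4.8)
The plan is to derive this corollary directly from the two facts established immediately before it: first, that the equivalence relation induced by the action of $\borel(\F_q^n)$ coincides with Echelon equivalence, i.e.\ $S\simeq_E S'\iff S\simeq_{\borel(\F_q^n)}S'$; and second, that the Echelon equivalence classes on $\subspaces{\F_q^n}{k}$ are in bijection with the $k$-subsets $\pi$ of $\{1,\ldots,n\}$, the class indexed by $\pi$ consisting of all subspaces whose canonical generator matrix has base row indices $\pi$. Since two equivalence relations that relate the same pairs partition the underlying set identically, the $\borel(\F_q^n)$-orbits are \emph{equal} as subsets to the Echelon equivalence classes. Consequently a complete and irredundant transversal of the orbits is obtained by selecting exactly one subspace from each Echelon class.

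First I would confirm that $E(\pi)$ is a legitimate representative of the Echelon class indexed by $\pi$. Writing $\pi=\{\pi_1,\ldots,\pi_k\}$ with $\pi_1<\cdots<\pi_k$, the matrix $\Gamma(\pi)=[e_{\pi_1}|\ldots|e_{\pi_k}]$ already has the reduced Echelon shape of Table~\ref{tab:canonic}: each column is a unit vector, the pivot entries $1$ occur precisely in rows $\pi_1,\ldots,\pi_k$, and every star position happens to be zero. Hence $\Gamma(\pi)$ coincides with its own canonical form $\Gamma_C(E(\pi))$, its base row indices are exactly $\pi$, and so $E(\pi)$ lies in the Echelon class indexed by $\pi$.

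It then remains to check that $\pi\mapsto E(\pi)$ is a bijection from the $k$-subsets of $\{1,\ldots,n\}$ onto the set of orbits. Surjectivity follows because every Echelon class, being indexed by some $k$-subset $\pi$, contains the subspace $E(\pi)$ just constructed; injectivity follows because distinct subsets $\pi\neq\pi'$ force distinct base row indices, hence distinct Echelon classes, hence---by the coincidence of the two relations---distinct $\borel(\F_q^n)$-orbits. This is consistent with the common count $\binom{n}{k}$ of both the subsets and the classes, confirming that the transversal is irredundant.

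I do not expect a genuine obstacle here, since the analytic heart of the argument---the identification $S\simeq_E S'\iff S\simeq_{\borel(\F_q^n)}S'$, supplied by the row-operation description of left multiplication by upper triangular matrices---has already been carried out. The only point demanding real care is the bookkeeping verification that $\Gamma(\pi)$ is the canonical form rather than merely some generator matrix lying in the correct class; this is what guarantees that the listed $E(\pi)$ are honest, canonical, and pairwise inequivalent orbit representatives.
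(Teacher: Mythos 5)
Your proof is correct and follows the same route the paper (implicitly) takes: the corollary is obtained by combining the identification $S\simeq_E S'\iff S\simeq_{\borel(\F_q^n)}S'$ with the earlier observation that the Echelon classes are indexed by $k$-subsets $\pi$ with canonical representatives $\Gamma(\pi)=[e_{\pi_1}|\ldots|e_{\pi_k}]$ and $E(\pi)$. Your additional bookkeeping---checking that $\Gamma(\pi)$ is its own canonical form with base row indices exactly $\pi$, and that $\pi\mapsto E(\pi)$ is a bijection onto the orbit set---merely spells out details the paper leaves implicit.
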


In the following we consider some properties of the $\borel(\F_q^n)$-incidence matrices. The following property is immediate from the Echelon equivalence classes:

\begin{lem}\label{lem:subset}
The entry $a_{\tau,\pi}$ of the incidence matrix $A_{t,k}^{\borel(\F_q^n)}$ whose row is indexed by the $\borel(\F_q^n)$-orbit on $E(\tau)$ and whose column is indexed by the $\borel(\F_q^n)$-orbit on $E(\pi)$ is nonzero if and only if $\tau\subseteq \pi$. In this case, if $a_{\tau,\pi}$ is nonzero it is a power of $q$.
\end{lem}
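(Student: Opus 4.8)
The plan is to compute $a_{\tau,\pi}$ directly from the explicit description of the $\borel(\F_q^n)$-orbit of $E(\pi)$. By the Echelon equivalence established above, this orbit consists of exactly those $k$-subspaces $S'$ whose canonical generator matrix has base row index set $\pi=\{\pi_1<\cdots<\pi_k\}$, and by the transversal description each such $S'$ is obtained by assigning arbitrary field values to the star positions of the canonical shape; there are $q^N$ of them, with $N=\sum_{j=1}^{k}(\pi_j-j)$ the number of stars. Since $a_{\tau,\pi}$ is the number of these $S'$ with $E(\tau)=\langle e_{\tau_1},\dots,e_{\tau_t}\rangle\subseteq S'$, and $E(\tau)\subseteq S'$ means $e_{\tau_i}\in S'$ for all $i$, I would first reduce the question to deciding when a standard basis vector $e_m$ lies in $S'$.

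The key step is a membership criterion for the columns of the canonical generator matrix $\Gamma$ of $S'$. Because $\Gamma$ is in reduced form, its pivot row $\pi_i$ carries the $1$ of column $i$ and zeros in every other column, so $\Gamma_{\pi_i,j}=\delta_{ij}$. Consequently, if $v=\sum_{j} c_j\,\Gamma_{\cdot,j}\in S'$, then reading off row $\pi_i$ forces $c_i=v_{\pi_i}$, whence $v\in S'$ if and only if $v=\sum_{j} v_{\pi_j}\,\Gamma_{\cdot,j}$. Taking $v=e_m$: if $m\notin\pi$ then all coefficients $(e_m)_{\pi_j}$ vanish and the combination is $0\ne e_m$, so $e_m\notin S'$; if $m=\pi_{j_0}\in\pi$ then only the $j_0$th column survives, so $e_m\in S'$ precisely when that column equals $e_m$, i.e.\ when the $\pi_{j_0}-j_0$ stars of column $j_0$ are all zero.

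From this criterion both assertions drop out. For the biconditional, $a_{\tau,\pi}\neq0$ yields some $S'$ in the orbit containing every $e_{\tau_i}$, forcing each $\tau_i\in\pi$ and hence $\tau\subseteq\pi$; conversely, if $\tau\subseteq\pi$ then $E(\pi)$ itself belongs to the orbit and contains $E(\tau)$, so $a_{\tau,\pi}\ge1$. For the power-of-$q$ claim, write $j_i$ for the index with $\pi_{j_i}=\tau_i$; then $e_{\tau_i}\in S'$ zeroes exactly the $\pi_{j_i}-j_i$ stars of column $j_i$, and these constraints involve pairwise disjoint star positions because they sit in distinct columns. Thus $E(\tau)\subseteq S'$ is equivalent to the vanishing of a fixed set of $Z=\sum_{i=1}^{t}(\tau_i-j_i)$ stars while the remaining $N-Z$ stars vary freely, giving $a_{\tau,\pi}=q^{\,N-Z}$.

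The only real subtlety I anticipate is the membership criterion: one must exploit the reduced (pivot) structure to pin down the coefficients as $c_i=v_{\pi_i}$ before drawing conclusions, rather than reasoning informally about supports. Once that is in place, both the containment characterization and the factorisation of the count into forced and free star positions are routine bookkeeping.
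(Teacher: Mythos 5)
Your proof is correct and is essentially the paper's own approach: the paper states this lemma without a written proof, calling it ``immediate from the Echelon equivalence classes,'' and your argument is exactly the detailed verification of that claim via the pivot/star structure of canonical generator matrices. Your membership criterion and the resulting count $a_{\tau,\pi}=q^{N-Z}$ also formalize the same style of star-counting the paper later carries out explicitly in its proof of Lemma~\ref{lem:blockform}.
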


\begin{cor}
If we substitute $q=1$ in $A_{t,k}^{\borel(\F_q^n)}$ we obtain the incidence matrix between all $t$-subsets and $k$-subsets of $\{1,\ldots,n\}$.
\end{cor}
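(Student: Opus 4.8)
The plan is to read the statement off Lemma~\ref{lem:subset} once both matrices are placed on a common footing. First I would fix the indexing established earlier: by the corollary on orbit representatives, the rows of $A_{t,k}^{\borel(\F_q^n)}$ are indexed by the $t$-subsets $\tau\subseteq\{1,\dots,n\}$ and its columns by the $k$-subsets $\pi\subseteq\{1,\dots,n\}$, via the representatives $E(\tau)$ and $E(\pi)$. The incidence matrix between $t$-subsets and $k$-subsets of $\{1,\dots,n\}$ is indexed in exactly the same way, its $(\tau,\pi)$-entry being $1$ if $\tau\subseteq\pi$ and $0$ otherwise. Since the two index sets coincide, it suffices to compare the matrices entrywise.

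Next I would treat each entry $a_{\tau,\pi}$ not as a single integer but as a monomial in the indeterminate $q$. By Lemma~\ref{lem:subset}, $a_{\tau,\pi}=0$ whenever $\tau\not\subseteq\pi$, while $a_{\tau,\pi}$ is a power of $q$ whenever $\tau\subseteq\pi$. The essential point to pin down is that this power is $q^{d(\tau,\pi)}$ for an exponent $d(\tau,\pi)$ that is a purely combinatorial quantity depending on $\tau$, $\pi$ and $n$ only, and not on the particular prime power $q$. I would extract this from the Echelon description of the orbit of $E(\pi)$: its members are exactly the subspaces whose canonical generator matrix has its pivots in the rows indexed by $\pi$ and free field entries (the stars) in the prescribed off-pivot positions, so the orbit has $q^{s(\pi)}$ elements for a star-count $s(\pi)$ independent of $q$. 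Imposing the containment $E(\tau)\subseteq S'$ then forces some of these star entries to take prescribed values and leaves the remaining ones free; the number of positions that stay free is again a combinatorial quantity $d(\tau,\pi)$ independent of $q$, and counting the admissible canonical matrices gives $a_{\tau,\pi}=q^{d(\tau,\pi)}$.

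With every entry written as $0$ or $q^{d(\tau,\pi)}$, the substitution $q=1$ becomes unambiguous: a zero entry stays $0$, while $q^{d(\tau,\pi)}$ specialises to $1^{d(\tau,\pi)}=1$. Hence the $(\tau,\pi)$-entry of $A_{t,k}^{\borel(\F_q^n)}$ evaluated at $q=1$ equals $1$ precisely when $\tau\subseteq\pi$ and $0$ otherwise, which is exactly the $(\tau,\pi)$-entry of the subset incidence matrix. As the index sets already agree, the two matrices are equal, which is the assertion of the corollary.

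The step I expect to be the main obstacle is the middle one: justifying that the exponent $d(\tau,\pi)$ is genuinely independent of $q$, so that ``substitute $q=1$'' is well defined. Lemma~\ref{lem:subset} on its own only asserts that each nonzero entry is \emph{some} power of $q$ for each fixed field; to license the specialisation I must exhibit the count as a single monomial valid for all $q$ simultaneously. This is why I would return to the explicit star-position bookkeeping in the canonical generator matrix rather than merely quoting the lemma, the remaining steps being purely formal.
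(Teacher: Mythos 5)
Your proof is correct and follows essentially the same route as the paper: the paper states this corollary as an immediate consequence of Lemma~\ref{lem:subset}, and its surrounding discussion (each Echelon class representative with $i$ stars has $q^i$ elements, and setting $q=1$ replaces all stars by $0$'s) is precisely your star-counting argument. Your extra care in checking that the exponent $d(\tau,\pi)$ is a purely combinatorial quantity independent of $q$, so that the substitution $q=1$ is well defined, is a legitimate sharpening of what the paper leaves implicit, but it is the same idea, not a different approach.
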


Considering the orbits of the standard Borel subgroup on subspaces or the Echelon equivalence classes, respectively, might be understood as the proper form of \lq\lq $q$-analogization\rq\rq: If the number of stars in an Echelon equivalence class representative---as depicted in Table \ref{tab:canonic2}---is $i$ each star can be substituted by a finite field element which yields the cardinality $q^i$ of this particular class. Setting $q=1$ means, that we replace all stars by $0$'s and each Echelon class contains only one element. Hence, the Echelon equivalence classes themselves can be considered as subsets. Furthermore, the incidence matrix $A_{t,k}^{\borel(\F_q^n)}$ becomes the incidence matrix between subsets for $q=1$.

\begin{lem}\label{lem:blockform}
The matrix $A_{t,\{t+1,t+2\}}^{\borel(\F_q^n)}=A_{t,t+1}^{\borel(\F_q^n)}\mid A_{t,t+2}^{\borel(\F_q^n)}$ has the following form:
\[
A_{t,\{t+1,t+2\}}^{\borel(\F_q^n)}=
\left[
\begin{array}{ccc|ccc|ccc}
q^{n-t-1}\hspace{-1cm}& & & & & & * & \cdots & *\\
& \ddots & & & A_{t,t+2}^{\borel(\F_q^{n-1})}\hspace{-0.5cm} & &\vdots & & \vdots \\
& &\hspace{-0.5cm}q^{n-t-1} & & & & * & \cdots & *\\ 
\hline
& & & 0 &\cdots & 0 & * & \cdots& *\\
& A_{t-1,t}^{\borel(\F_q^{n-1})}\hspace{-0.5cm} & & \vdots &  & \vdots & \vdots & & \vdots\\
& & & 0 & \cdots & 0 & * & \cdots &* \\
\end{array}\right]
\]
The first block of columns is indexed by all $(t+1)$-subsets of $\{1,\ldots,n\}$ containing $6$, the second block of columns is indexed by all $(t+2)$-subsets of $\{1,\ldots,n-1\}$, all remaining $(t+1)$- and $(t+2)$-subsets of $\{1,\ldots,n\}$ occur in the third block of columns. The first block of rows is indexed by all $t$-subsets of  $\{1,\ldots,n-1\}$  and the second block of rows is indexed by all $t$-subsets of $\{1,\ldots,n\}$ containing $6$.
\end{lem}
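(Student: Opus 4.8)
The plan is to compute each entry $a_{\tau,\pi}$ of $A_{t,\{t+1,t+2\}}^{\borel(\F_q^n)}$ explicitly from the Echelon description of the orbits and then to read off the claimed block pattern by sorting the indexing subsets according to whether they contain the largest index $n$ (which I take to be the element written ``$6$'' in the statement). By the preceding corollary a member $S'$ of the orbit $\borel(\F_q^n)(E(\pi))$, with $\pi=\{\pi_1<\dots<\pi_k\}$, is a subspace whose canonical reduced generator matrix has its pivots in the rows $\pi_1,\dots,\pi_k$, and in its $j$th column the free stars occupy the $\pi_j-j$ non-pivot rows lying above the pivot. Since the pivot coordinates recover the coefficients of any vector of $S'$, a basis vector $e_m$ with $m=\pi_{j_0}\in\pi$ lies in $S'$ if and only if the $j_0$th column equals the unit vector $e_{\pi_{j_0}}$, that is, all stars of that column vanish. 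Hence $E(\tau)\subseteq S'$ holds exactly when the stars of the columns whose pivots lie in $\tau$ are all zero while the remaining stars stay free, so that, for $\tau\subseteq\pi$,
\[
a_{\tau,\pi}=q^{\sum_{j:\,\pi_j\notin\tau}(\pi_j-j)},
\]
and $a_{\tau,\pi}=0$ otherwise by Lemma~\ref{lem:subset}. This formula is the engine for all four non-star blocks.

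I would then verify the two elementary blocks. For the top-left block the rows are the $t$-subsets $\tau\subseteq\{1,\dots,n-1\}$ and the columns are the $(t+1)$-subsets $\pi=\rho\cup\{n\}$ with $\rho$ a $t$-subset of $\{1,\dots,n-1\}$; as $n\notin\tau$, the relation $\tau\subseteq\pi$ forces $\tau=\rho$, so the block is diagonal, and the single surviving column index $\pi_{t+1}=n$ contributes the exponent $n-(t+1)$, giving the diagonal value $q^{n-t-1}$. For the bottom-middle block the rows are the $t$-subsets containing $n$ and the columns are the $(t+2)$-subsets of $\{1,\dots,n-1\}$; since $n\in\tau$ but $n\notin\pi$ we have $\tau\not\subseteq\pi$, so this block vanishes by Lemma~\ref{lem:subset}.

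The two recursive blocks rest on the observation that the exponent $\sum_{j:\pi_j\notin\tau}(\pi_j-j)$ depends only on the subsets $\tau\subseteq\pi$ and not on the ambient dimension. In the top-middle block both $\tau$ and $\pi$ already lie in $\{1,\dots,n-1\}$, so they index a row and a column of $A_{t,t+2}^{\borel(\F_q^{n-1})}$ and produce the identical power of $q$; the block therefore coincides with $A_{t,t+2}^{\borel(\F_q^{n-1})}$. In the bottom-left block I would write $\tau=\tau'\cup\{n\}$ and $\pi=\pi'\cup\{n\}$ with $\tau'$ a $(t-1)$-subset and $\pi'$ a $t$-subset of $\{1,\dots,n-1\}$; as the largest element $n$ occupies position $t+1$ of $\pi$ and lies in $\tau$, it contributes nothing, and the exponent collapses to $\sum_{j\le t:\,\pi'_j\notin\tau'}(\pi'_j-j)$, which is exactly the exponent attached to the pair $(\tau',\pi')$ in $A_{t-1,t}^{\borel(\F_q^{n-1})}$. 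Thus this block equals $A_{t-1,t}^{\borel(\F_q^{n-1})}$, while the remaining $(t+1)$- and $(t+2)$-subsets fill the third block of columns with unconstrained entries, as asserted.

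The main obstacle is the containment criterion behind the entry formula: I must argue carefully that the conditions ``$e_{\tau_i}\in S'$'' coming from distinct elements $\tau_i$ of $\tau$ are mutually independent and that each one annihilates precisely the stars of its own pivot column, so that exactly $\sum_{\pi_j\in\tau}(\pi_j-j)$ stars are killed and none is double counted. Once this bookkeeping is secured, the block decomposition reduces to sorting subsets by their largest element, which is routine.
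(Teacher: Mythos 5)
Your proof is correct, and it in fact establishes more than the paper's own proof does. The paper's argument rests on the same machinery (Lemma~\ref{lem:subset} plus counting free stars in canonical echelon generator matrices), but it only verifies the upper-left diagonal block --- counting matrices $[e_{\pi_1}|\ldots|e_{\pi_t}|v]$ with $v$ having a $1$ in the last position, giving $q^{n-t-1}$ --- and dismisses the remaining blocks as straightforward. You instead derive the closed-form entry
$a_{\tau,\pi}=q^{\sum_{j:\,\pi_j\notin\tau}(\pi_j-j)}$ for $\tau\subseteq\pi$
and specialize it to each block. This buys you the two pieces the paper never argues explicitly: that the top-middle block literally equals $A_{t,t+2}^{\borel(\F_q^{n-1})}$ (because the exponent depends only on the pair $(\tau,\pi)$, not on the ambient dimension), and that the bottom-left block equals $A_{t-1,t}^{\borel(\F_q^{n-1})}$ after deleting the common element $n$ from row and column labels (because $n$, sitting in the last position of $\pi$ and lying in $\tau$, contributes nothing to the exponent). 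These recursive identifications are the least obvious part of the lemma, so making them explicit is a genuine improvement rather than redundancy. Two small remarks: you were right to read the ``$6$'' in the statement as $n$ (it is a typo, frozen in from the paper's $n=6$ example); and the worry in your final paragraph is already discharged by your own argument --- the condition $e_{\tau_i}\in S'$ constrains exactly the stars of the single column with pivot row $\tau_i$, so conditions for distinct elements of $\tau$ govern disjoint sets of stars and are automatically independent. As a sanity check, your formula reproduces every entry of the paper's $15\times 15$ example matrix $M$ for $n=6$, $t=2$, $q=2$.
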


\begin{proof}
The proof is straightforward and uses Lemma \ref{lem:subset}. We only look at the block in the upper left corner. The rows correspond to the $t$-subsets of $\{1,\ldots,n-1\}$ and the columns correspond to $(t+1)$-subsets of $\{1,\ldots,n\}$ containing the element $6$ which arise by all $t$-subsets of $\{1,\ldots,n-1\}$ by just adding the element $6$. In order to determine the matrix entry whose row is indexed by the $t$-subset $\{\pi_1,\ldots,\pi_t\}$ and whose column is indexed by the $(t+1)$-subset $\{\pi_1,\ldots,\pi_t,6\}$ we have to count the number of canonical generator matrices of the form $[e_{\pi_1}|\ldots|e_{\pi_t}|v]$ where $v$ must have the entry $1$ in the last position. Since the last entry of $v$ is $1$ and since $t$ entries are $0$ due to the remaining $t$ base row indices we have $n-t+1$ positions in $v$ for which we can choose any finite field element. Finally, we get $q^{n-t-1}$ as corresponding incidence matrix entry.
\end{proof}

\section{Construction}

In this section we finally describe the construction of an infinite family of $$t\da(t+4,\{t+1,t+2\},q^3+q^2+q+1;q)$$ designs as a union of certain $\borel(\F_q^n)$-orbits on $(t+1)$- and $(t+2)$-subspaces of $\F_q^n$. 

As selection of $\borel(\F_q^n)$-orbits we choose the representatives $\T$ of $\borel(\F_q^n)$-orbits belonging to the first two blocks of columns of $A_{t,\{t+1,t+2\}}^{\borel(\F_q^n)}$ as they are given in Lemma \ref{lem:blockform}, i.\,e. we get the following set of subspaces:
\[
\T:=\{E(\pi\cup\{6\})\mid \pi\in \mbox{${\{1,\ldots,n-1\}\choose t+1}$}\}
\cup\{E(\pi)\mid \pi\in \mbox{${\{1,\ldots,n-1\}\choose t+2}$}\}
\]
and the union of orbits:
\[
\B:=\bigcup_{S\in\T}\borel(\F_q^n)(S)
\]
By this selection the following columns of $A_{t,\{t+1,t+2\}}^{\borel(\F_q^n)}$ are chosen:
\[M=
\left[
\begin{array}{ccc|ccc}
q^{n-t-1}\hspace{-1cm}& & & & &\\
& \ddots & & & A_{t,t+2}^{\borel(\F_q^{n-1})}\hspace{-0.5cm} &\\
& &\hspace{-0.5cm}q^{n-t-1} & & &\\ 
\hline
& & & 0 &\cdots &0\\
& A_{t-1,t}^{\borel(\F_q^{n-1})}\hspace{-0.5cm} & & \vdots &  &\vdots\\
& & & 0 & \cdots & 0\\
\end{array}\right]
\]
The final issue is now to investigate under which conditions the set $\B$ becomes a $t$-wise balanced $t\da(n,\{t+1,t+2\},\lambda;q)$ design. To answer this question we determine the row sum of the selected matrix. For the first block of rows we get the row sum: 
\[
\alpha=q^{n-t-1}+\qbinom{(n-1)-t}{(t+2)-t}{q} = q^{n-t-1}+\qbinom{n-t-1}{2}{q}
\]
and for the second block of rows the rows add up to:
\[
\beta=\qbinom{(n-1)-(t-1)}{t-(t-1)}{q}=\qbinom{n-t}{1}{q}
\]
It is clear that the corresponding selection of $\borel(\F_q^n)$-orbits and columns, respectively, becomes a $t$-wise balanced $t\da(n,\{t+1,t+2\},\lambda;q)$ design if and only if both values are equal $\alpha=\beta$. In this case we get the index: 
\[
\lambda=\alpha=\beta = \qbinom{n-t}{1}{q} = q^{n-t-1}+\ldots + q^2+q+1
\]
It is easy to see that $\alpha=\beta$ if and only if $\qbinom{n-t-1}{1}{q}=\qbinom{n-t-1}{2}{q}$. From the symmetry of the $q$-binomial coefficient we get $n-t-1 = 3$ and hence: 
\[
n = t+4
\]

This shows that $\B$ really defines a $t\da(t+4,\{t+1,t+2\},q^3+q^2+q+1;q)$ design.

Furthermore, in $M$ each rom sum is equal to $q^3+q^2+q+1$ which means that exactly four entries are nonzero. Hence, substituting $q=1$ in the matrix $M$ we obtain a constant row sum of $4$. The subsets corresponding to the columns of $M$ finally defines an ordinary nontrivial $t$-wise balanced design with parameters $$t\da(t+4,\{t+1,t+2\},4)$$ for all positive integers $t\ge 1$.

Finally, we show an example: 

We construct a $2\da(6,\{3,4\},15;2)$ design. The set of representatives $\T$ of the chosen $B(\F_2^6)$-orbits on $3$- and $4$-subspaces is the following one:
\begin{center}
$E(\{1,2,6\})$, $E(\{1,3,6\})$, $E(\{1,4,6\})$, $E(\{1,5,6\})$, $E(\{2,3,6\})$, $E(\{2,4,6\})$, $E(\{2,5,6\})$, $E(\{3,4,6\})$, $E(\{3,5,6\})$, $E(\{4,5,6\})$, $E(\{1,2,3,4\})$, $E(\{1,2,3,5\})$, $E(\{1,2,4,5\})$, $E(\{1,3,4,5\})$, $E(\{2,3,4,5\})$
\end{center}
If the list
\begin{center}
$E(\{1,2\})$, $E(\{1,3\})$, $E(\{1,4\})$, $E(\{1,5\})$, $E(\{2,3\})$, $E(\{2,4\})$, $E(\{2,5\})$, $E(\{3,4\})$, $E(\{3,5\})$, $E(\{4,5\})$, $E(\{1,6\})$, $E(\{2,6\})$, $E(\{3,6\})$, $E(\{4,6\})$, $E(\{5,6\})$,
\end{center}
denotes the representatives of $B(\F_2^6)$-orbits on the set of $2$-subspaces, we get the following incidence matrix $M$ between the orbits on $2$-subspaces and the selected orbits on $3$- and $4$-subspaces:
\[M=
\left[
\begin{array}{cccccccccc|ccccc}
8&&&&&&&&&& 1&2&4& &\\
&8&&&&&&&&& 1&2& &4&\\
&&8&&&&&&&& 1& &2&4&\\
&&&8&&&&&&&  &1&2&4&\\
&&&&8&&&&&& 1&2& & &4\\
&&&&&8&&&&& 1& &2& &4\\
&&&&&&8&&&&  &1&2& &4\\
&&&&&&&8&&& 1& & &2&4\\
&&&&&&&&8&&  &1& &2&4\\
&&&&&&&&&8&  & &1&2&4\\
\hline
1&2&4&8& & & & & & &&&&\\
1& & & &2&4&8& & & &&&&\\
 &1& & &2& & &4&8& &&&&\\
 & &1& & &2& &4& &8&&&&\\
 & & &1& & &2& &4&8&&&&
\end{array}
\right]
\]
The row sum in each row is exactly $15$ which shows that the selected set of orbit representatives on $3$- and $4$-subspaces yields a $2\da(6,\{3,4\},15;2)$ design. 

Moreover, the given set of $3$- and $4$-subsets defines a $2\da(6,\{3,4\},4)$ design.


\end{document}